\newtheorem{theorem}{Theorem}[section]
\newtheorem{lemma}[theorem]{Lemma}
\theoremstyle{definition}
\theoremstyle{remark}
\numberwithin{equation}{section}
\begin{document}

\begin{center}
\textbf{\large{Noncommutativity of monotone Lagrangian cobordisms}}
\end{center}
\begin{center}
\textbf{Vardan Oganesyan}\\
University of California Santa Cruz,\\
E-mail address: vardanmath@gmail.com
\end{center}

\textbf{Abstract.} We construct a monotone spin Lagrangian cobordism from $L$ to $(L_1, L_2)$ such that there is no monotone spin Lagrangian cobordism from $L$ to $(L_2, L_1)$, where $L, L_1, L_2  \subset \mathbb{C}P^7$.

\section{Introduction}

Lagrangian cobordism is a relation between Lagrangian submanifolds of a symplectic manifold $(M, \omega)$ and was introduced by Arnold in \cite{Arnold}. First, we need to fix a class of Lagrangians we consider.

\vspace{0.04in}

Consider the set of immersed Lagrangians $\mathcal{L}ag(M)$. Let $\mathbb{Z}_2[\mathcal{L}ag(M)]$ be a set of finite formal linear combinations, where $\mathbb{Z}_2$ is $\mathbb{Z}/2\mathbb{Z}$. We define an abelian group
\begin{equation*}
\begin{gathered}
\mathcal{C}ob(M) = \mathbb{Z}_2[\mathcal{L}ag(M)]/\sim, \;\;\; \text{where $L_1 + \ldots + L_k \; \sim \; L_1' + \ldots + L_m' \;$ if} \\
 \text{$(L_1, \ldots, L_k)$ is cobordant to $(L_1', \ldots, L_m')$}.
\end{gathered}
\end{equation*}
This group was computed by Arnold for $M = \mathbb{C}$ and $M = T^{*}S^1$ in \cite{Arnold}. Note that the definition of cobordism groups given by Arnold is a little bit different. For exact symplectic manifold $M$ the study of the group $\mathcal{C}ob(M)$ was reduced to pure topological problem by Eliasberg in \cite{Eliashberg}. However, it is  hard to compute the group $\mathcal{C}ob(M)$ explicitly. The cobordism group for $\mathbb{C}^n$ and for cotangent bundles were studied in more details by Audin in \cite{Audin1, Audin2, Audin3}.

The study of $\mathcal{C}ob(M)$ was reduced to pure topological problem for arbitrary $M$ by Rathel-Fournier  in \cite{Dominique1, Dominique2}. Moreover, in these papers the groups $\mathcal{C}ob(S_g)$ were computed for $g \geqslant 2$, where $S_g$ is a surface of genus $g$.

In the definition above we can replace immersed Lagrangians by embedded Lagrangians and immersed cobordisms by embedded cobordisms. The study of embedded cobordisms can also be reduced to topological problems. If there is an immersed Lagrangian cobordism between embedded Lagrangians $(L_1, \ldots, L_k)$ and $(L_1', \ldots, L_k')$, then there is embedded Lagrangian cobordisms between them (see \cite[Secton 2.3]{Octav1}).

\vspace{0.04in}

In contrast, cobordisms satisfying suitable geometric constraints display remarkable rigidity phenomena and their study can not be reduced to pure topology.

Chekanov in \cite{Chekanov} showed that monotone cobordant Lagrangians have the same number (counted with signs) of pseudoholomorphic discs of Maslov index $2$ passing through a generic point.

It was shown by Biran and Cornea in \cite{Octav1, BC2, BC3, BC4} that monotone Lagrangian cobordisms are closely related to the derived Fukaya category of $M$. Similar results were obtained by Nadler and Tanaka in \cite{NT}.

\vspace{0.1in}

In this paper we found a new rigidity phenomena. Let $\mathcal{L}ag^{s,m}(M)$ be the set of spin and monotone Lagrangians. We construct Lagrangians $L, L_1, L_2 \in \mathcal{L}ag^{s,m}(M)$ and a spin monotone Lagrangian cobordism between $L$ and $(L_1, L_2)$. Then, we prove that there is no spin monotone Lagrangian cobordism between $L$ and $(L_2, L_1)$. Note that there is embedded Lagrangian cobordism between $L$ and $(L_2, L_1)$, but this cobordism can not be spin and monotone.

Let $F\mathcal{L}ag^{s,m}(M)$ be a free group generated by spin monotone Lagrangians. Consider a set of relations $R$
\begin{equation*}
\begin{gathered}
\text{ $L_1 \ldots L_k \; = \; L_1' \ldots L_m'$ \quad if } \\
\text{ $(L_1, \ldots, L_k)$ is spin monotone Lagrangian cobordant to $(L_1', \ldots, L_m')$}.
\end{gathered}
\end{equation*}
Let $H \subset F\mathcal{L}ag^{s,m}(M)$ be a normal subgroup generated by the relations $R$. We define
\begin{equation*}
F\mathcal{C}ob^{s,m}(M) = F\mathcal{L}ag^{s,m}(M)/H.
\end{equation*}
The following question was stated by Biran and Cornea
\begin{center}
Is the group $F\mathcal{C}ob^{s,m}(M)$ commutative?
\end{center}
Note that $H$ contains much more elements than $R$. So, existence of the Lagrangians $L, L_1, L_2$, constructed in this paper, does not imply  that $L_1L_2 \neq L_2L_1$ in the group $F\mathcal{C}ob^{s,m}(M)$.

The Lagrangians constructed in this paper should help to study commutativity of $F\mathcal{C}ob^{s,m}(M)$. This question will be studied in a further paper, where we will prove that $L_1L_2 \neq L_2L_1$.

\vspace{0.08in}

We can define a similar free group for immersed Lagrangians, but we will get a group isomorphic to $Cob(M)$.

\vspace{.15in}

\noindent \textbf{Acknowledgments.}
The author thanks Viktor Ginzburg and Dominique Rathel-Fournier for helpful discussions.

\section{Preliminaries}

\subsection{Lagrangian submanifolds of $\mathbb{C}P^n$}

We consider $\mathbb{C}P^n$ endowed with the standard Fubini-Study form $\omega_{FS}$ such that $\omega_{FS}(\mathbb{C}P^1) = \pi$. There are two homomorphisms assigned to each embedded Lagrangian $L \subset \mathbb{C}P^n$
\begin{equation*}
\mu:  \pi_2(\mathbb{C}P^n, L) \rightarrow \mathbb{Z}, \quad \omega_{FS}:  \pi_2(\mathbb{C}P^n, L) \rightarrow \mathbb{R},
\end{equation*}
where $\mu$ is the Maslov class (see \cite[Section 2]{Smith} or \cite{Kail}) and $\omega_{FS}(\alpha) = \int_{\alpha}\omega_{FS}$. Define the minimal Maslov number
\begin{equation*}
N_L = min\{\mu(\alpha) > 0| \; \alpha \in \pi_2(\mathbb{C}P^n, L)\}, \;\;\; \text{$N_L = 0\;$ if $\;\mu \equiv 0$}.
\end{equation*}
Note that $N_L$ is even when $L$ is orientable. We say that $L$ is monotone if there exists a constant $\tau > 0$ such that
\begin{equation*}
\mu(\alpha) = \tau\omega_{FS}(\alpha), \;\;\; \text{for any $\alpha \in \pi_2(\mathbb{C}P^n, L)$ and $N_L \geqslant 2$}.
\end{equation*}
The constant $\tau$ is called the monotonicity constant. It can be shown (see \cite[Section 2]{Smith}) that $\tau = \frac{2(n+1)}{\pi}$ (if it exists).

We say that a Lagrangian $L$ is spin if it is orientable and the second Stiefel-Whitney class $w_2(L)$ vanishes.

Let $L_1, L_2 \subset \mathbb{C}P^n$ be orientable closed monotone Lagrangians.  Then, the minimal Maslov number of the pair $N_{L_1, L_2}$ is the greatest common divisor of $N_{L_1}$ and $N_{L_2}$.

The pair $(L_1, L_2)$ is called relatively spin if there exists an $h \in H^2(\mathbb{C}P^n; \mathbb{Z}_2)$ such that $h|_{L_1} = w_2(L_1)$ and $h|_{L_2} = w_2(L_2)$. Note that $h$ is defined up to elements of $H^2(\mathbb{C}P^n, L_1; \mathbb{Z}_2) \cap H^2(\mathbb{C}P^n, L_2; \mathbb{Z}_2)$. When we choose orientations of $L_1, L_2$ and $h$, then we say that we fix relative spin structure. Note that if $L_1, L_2$ are both spin, then the pair $(L_1, L_2)$ is always relatively spin.

We say that the intersection $S = L_1 \cap L_2$ is clean if $S$ is a manifold and $T_p L_1 \cap T_pL_2 = T_pS$ for any $p\in S$. For example, if $L_1 = L_2$, then their intersection is clean. Transversal intersections are also clean.

\begin{theorem}\label{basictheorem} (see \cite[Section 2.3]{Felix})
Assume that

\vspace{0.04in}
\noindent
- $L_1, L_2 \subset \mathbb{C}P^n$ are monotone Lagrangians and the minimal Maslov number of the pair $N_{L_1, L_2} > 3$

\vspace{0.04in}
\noindent
- $L_1, L_2$ are spin and we fix a relative spin structure

\vspace{0.04in}
\noindent
- Let $\Lambda = \mathbb{Z}[T, T^{-1}]$ be the algebra of Laurent polynomials, where we grade $deg(T)=-N_{L_1, L_2}$.

\vspace{0.05in}
Then, the Floer homology $HF_{*}(L_1, L_2; \Lambda)$ is defined. These homology groups depend on the relative spin structure of the pair $L_1, L_2$.

If $L_1 \cap L_2$ is clean and \textbf{connected}, then there exists a homological spectral sequence
\begin{equation*}
E_{p,q}^1 = \left\{
\begin{array}{l}
 H_{q}(L_1 \cap L_2; \mathbb{Z}), \quad \text{if $p \in  N_{L_1, L_2} \; \mathbb{Z}$,} \\
0 \quad \text{otherwise}
\end{array}
\right.
\end{equation*}
\begin{equation*}
\bigoplus_{p+q = *} E^{\infty}_{p,q} = HF_{*}(L_1, L_2; \Lambda).
\end{equation*}
It turns out that
\begin{equation*}
HF_{*}(L_1, L_2; \Lambda) = HF_{*+2}(L_1, L_2; \Lambda).
\end{equation*}
Let $N$ be a positive divisor of $N_{L_1, L_2}$ and $\widetilde{\Lambda} = \mathbb{Z}[T, T^{-1}]$, where $deg(T) = - N$. Then
\begin{equation*}
HF_{*}(L_1, L_2; \widetilde{\Lambda}) = \bigoplus\limits_{k=0}^{\frac{N_{L_1, L_2}}{N} - 1}  HF_{* + kN}(L_1, L_2; \Lambda).
\end{equation*}
\end{theorem}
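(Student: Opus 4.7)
The plan is to construct $HF_*(L_1,L_2;\Lambda)$ via a Morse--Bott or ``pearly'' model adapted to the clean intersection situation, essentially combining Pozniak's treatment of clean intersections with the Biran--Cornea/Seidel construction of monotone Floer homology. Concretely, I would fix an auxiliary Morse function $f$ on $S = L_1 \cap L_2$ and form the chain complex freely generated over $\Lambda$ by $\mathrm{Crit}(f)$, with differential counting configurations that consist of finite chains of pseudoholomorphic strips joining critical points of $f$, glued along negative gradient trajectories of $f$ on $S$. The hypothesis $N_{L_1,L_2} > 3$ ensures that in the compactified moduli spaces of virtual dimension $\leq 1$, sphere bubbles and Maslov-$0$ disc bubbles do not appear, which is exactly what is needed to obtain $d^2 = 0$ and invariance under the auxiliary choices. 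The spin and relative spin data coherently orient the relevant moduli spaces, so the theory can be lifted from $\mathbb{Z}_2$ to $\mathbb{Z}$ coefficients, and the dependence on the orientation class $h$ is the dependence on the relative spin structure noted in the statement.

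For the spectral sequence I would use the natural filtration by $T$-degree: a pseudoholomorphic strip contributes $T^k$ where $k$ tracks the number of times its Maslov index wraps around $N_{L_1,L_2}$, and monotonicity implies that every non-constant strip has strictly positive $T$-weight. The associated graded $E^0$-differential therefore sees only pure Morse trajectories of $f$ on $S$; by the Morse--Bott/Pozniak identification, $E^1_{p,q}$ is then $H_q(S;\mathbb{Z})$ concentrated in columns $p \in N_{L_1,L_2}\mathbb{Z}$, as stated. The higher $d^r$ count pearly configurations of $r$ strips of total Maslov index $rN_{L_1,L_2}$, and the sequence converges to $HF_*(L_1,L_2;\Lambda)$ since the filtration is bounded below and exhaustive in each total degree. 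The connectedness hypothesis on $S$ is what makes the column description clean, with a single copy of $H_*(S)$ per $N_{L_1,L_2}$-strip rather than multiple twisted copies.

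The $2$-periodicity follows from the quantum module structure of Floer homology over $QH^*(\mathbb{C}P^n;\Lambda)$. In quantum cohomology of $\mathbb{C}P^n$ the degree-$2$ hyperplane class $H$ satisfies $H^{n+1} = T^{2(n+1)/N_{L_1,L_2}}$, hence $H$ is invertible after tensoring with $\Lambda$; multiplication by $H$ then yields an isomorphism $HF_*(L_1,L_2;\Lambda) \to HF_{*+2}(L_1,L_2;\Lambda)$. The last formula is a purely algebraic base-change statement: replacing $\deg T = -N_{L_1,L_2}$ by a divisor $-N$ amounts to taking the same underlying Floer chain groups and regrouping degrees modulo $N$, and the direct sum decomposition over $k = 0, \ldots, N_{L_1,L_2}/N - 1$ simply records this regrading.

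The main obstacle is the clean intersection Floer package itself: one must verify that suitably generic almost complex structures and Morse data make all pearly moduli spaces of dimension $\leq 1$ transverse, describe their boundaries correctly, and ensure that the exclusion of sphere and disc bubbling really holds in that range under the numerical assumption $N_{L_1,L_2} > 3$. Once this is established, together with coherent orientations from the relative spin structure, the spectral sequence, the quantum module structure, and the base change are standard homological algebra over $\Lambda$, and the statement reduces to assembling the ingredients available from \cite{Felix}.
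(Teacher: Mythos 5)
The paper states Theorem~\ref{basictheorem} as a citation to \cite{Felix} and gives no proof of its own, so there is no internal argument to compare against; what can be assessed is whether your sketch is a plausible route to the cited result, and it is. You correctly identify the main ingredients in the Pozniak/Biran--Cornea/Schmaschke tradition: a Morse--Bott pearl complex generated over $\Lambda$ by $\mathrm{Crit}(f)$ for an auxiliary Morse function $f$ on $S = L_1 \cap L_2$; coherent orientations from the relative spin structure to upgrade from $\mathbb{Z}_2$ to $\mathbb{Z}$ coefficients; the filtration by Novikov ($T$-)degree, whose associated graded differential is the Morse differential on $S$ and which yields the $E^1$-page $H_*(S;\mathbb{Z})$ supported on columns $p \in N_{L_1,L_2}\mathbb{Z}$; the $QH^*(\mathbb{C}P^n;\Lambda)$-module structure with invertibility of the hyperplane class $H$ for the $2$-periodicity; and a regrading argument for the final base-change identity. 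Two points are worth making explicit. The periodicity argument uses $N_{L_1,L_2} \mid 2(n+1)$ so that $H^{n+1}$ is a power of $T$; this does hold for monotone Lagrangians in $\mathbb{C}P^n$ but should be stated, and one must also know that the clean-intersection pearl model carries the unital quantum-module action, which is itself part of what is being cited rather than a free add-on. More importantly, essentially all of the mathematical content -- transversality for pearly moduli spaces in the clean-intersection setting, Gromov compactness and gluing, and the precise bubbling exclusion provided by $N_{L_1,L_2} > 3$ -- is compressed into your final paragraph and attributed to \cite{Felix}. That attribution is appropriate, but it means your sketch is an outline of the architecture rather than a proof; it would not stand on its own without the analytic foundations in the reference.
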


\vspace*{0.07in}

Note that if $L_1 = L_2$, then the spectral sequence coincide with the spectral sequence of Oh.

The theorem above holds true in a more general setting and not only for Lagrangians of $\mathbb{C}P^n$. Also, the theorem holds true when $L_1 \cap L_2$ is clean, but not necessarily connected. However, when $L_1 \cap L_2$ is not connected we need to replace the spectral sequence by two other spectral sequences and more complicated coefficients. The reader can find all the details in \cite{Felix}.

In this paper we work only with connected clean intersections.

\subsection{Lagrangian cobordism}\label{Lagrdef}

In this section we give only basic definitions. Much more details can be found in \cite{Octav1}.

\vspace{0.08in}

Let $(M, \omega)$ be a closed symplectic manifold and $L_1, \ldots, L_k, L_1', \ldots, L_m' \subset M$ be  monotone embedded Lagrangians with the same monotonicity constants. In the previous section we discussed that if $(M, \omega) = (\mathbb{C}P^n, \omega_{FS})$, then monotonicity constant is the same for all monotone Lagrangians. Consider the complex plane $\mathbb{C}$ with the standard symplectic form $\omega_{\mathbb{C}}$ and $M \times \mathbb{C}$ endowed with $\omega \oplus \omega_{\mathbb{C}}$.

We say that there is a monotone  cobordism
\begin{equation*}
V: \; (L_1, \ldots, L_k) \rightsquigarrow (L_1', \ldots, L_m')
\end{equation*}
if there exists an embedded monotone Lagrangian $V \subset M \times \mathbb{C}$ such that the projection $\pi: V \rightarrow \mathbb{C}$ has the form shown in the following figure

  \begin{figure}[H]
  \centering
  \includegraphics[width=0.55\linewidth]{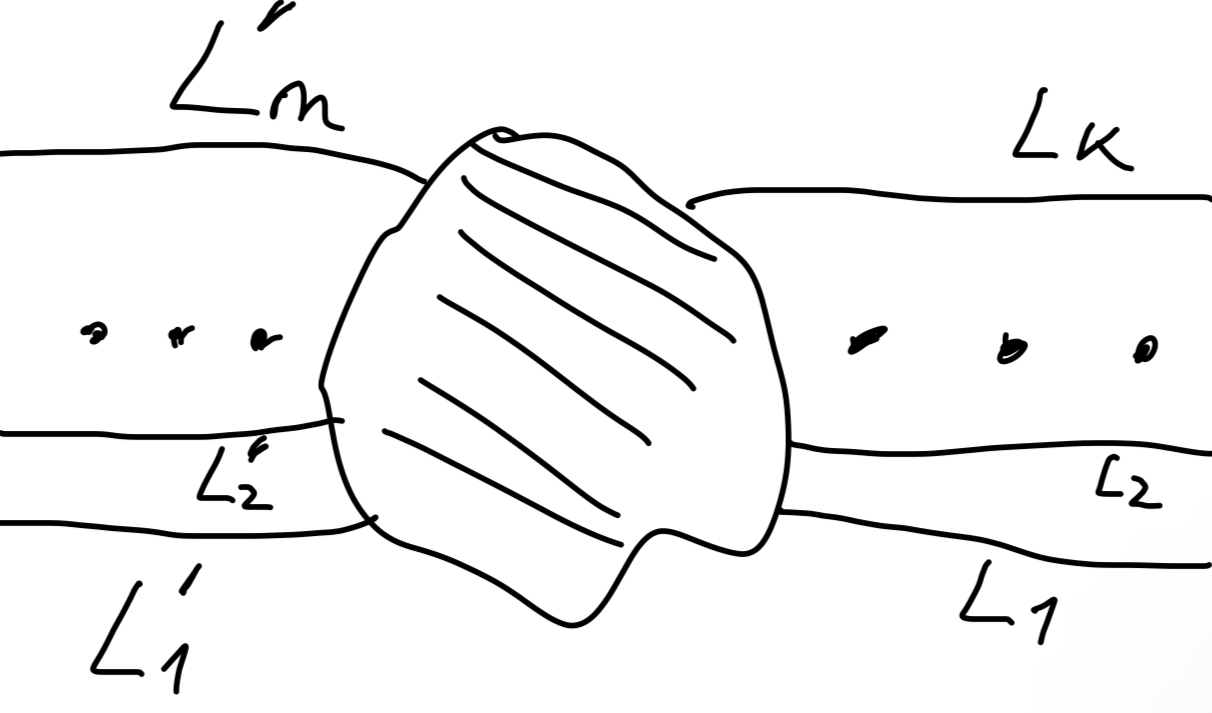}
\end{figure}

 In other words, for some compact set $K \subset \mathbb{C}$ we have
\begin{equation*}
\pi^{-1}(\mathbb{C} \setminus K) = \bigcup_{s=1}^m \big((-\infty, a) \times \{s\} \times L_s' \big) \bigcup_{r=1}^{k} \big( (b, +\infty) \times \{r\} \times L_r \big),
\end{equation*}
where $a, b$ are some real numbers.

We say that the cobordism is spin if all  Lagrangians $V, L_s, L_r'$ are spin. Let $j_s: L_s \hookrightarrow V$ and $j_r': L_r' \hookrightarrow V$ be embedding as ends, i.e.
\begin{equation*}
j_s(L_s) = \{c\} \times \{s\} \times L_s, \;\;\; j_r'(L_r') = \{c\} \times \{r\} \times L_r'
\end{equation*}
for some big $c \in \mathbb{R}$. Let $h$, $h_i$, $h_r'$ be a relative spin structures of $V$, $L_i$ and $L_r'$, respectively. We say that relative spin structures of $L_i$, $L_r'$ are induced from $V$ if
\begin{equation*}
j_s^{*}h = h_s, \quad (j_r')^{*}h = h_r'.
\end{equation*}

The following theorem shows that cobordant Lagrangians are closely related to each other.

\begin{theorem}\label{longexactseq} (\cite[Corollary 1.1.3]{BC2}, \cite[Section 4]{Haug})
Let $L, L_1, L_2, K$ be monotone spin Lagrangians with equal monotonicity constants. Assume that there exists a spin monotone cobordism $V: L \rightsquigarrow (L_1, L_2)$. If relative spin structures of $L, L_1, L_2, K$ are induced from $V$ and $N_K > 3$, then  we have the following exact sequence
\begin{equation*}
\rightarrow HF_{*}(K, L_2; \Lambda) \rightarrow HF_{*}(K, L_1; \Lambda) \rightarrow HF_{*}(K, L; \Lambda) \rightarrow HF_{*-1}(K, L_2; \Lambda) \rightarrow,
\end{equation*}
where $\Lambda = \mathbb{Z}[T, T^{-1}]$ and $deg(T)$ is the same for all Floer homology groups and is a common divisor of $N_{K, L_1}$, $N_{K, L_2}$, $N_{K, L}$, $N_V$.
\end{theorem}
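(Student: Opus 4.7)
The plan is to follow the Biran--Cornea cone-decomposition scheme for Lagrangian cobordisms, specialized to the three-ended cobordism $V:L\rightsquigarrow (L_1,L_2)$, and to repackage the resulting decomposition as a long exact sequence in Floer homology.

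First, I would work inside $(M\times\mathbb{C},\omega\oplus\omega_{\mathbb{C}})$ with an almost complex structure that is split outside a large compact set. For a properly embedded curve $\gamma\subset\mathbb{C}$ which at infinity consists of horizontal rays lying inside strips over which $\pi(V)$ is cylindrical, the product $\gamma\times K$ is a Lagrangian which is cylindrical at infinity in a way compatible with $V$. Because $K$ is monotone with the same constant as $L,L_1,L_2$, the Lagrangian $\gamma\times K$ is monotone with the same constant as $V$, so $HF(V,\gamma\times K;\Lambda)$ is well defined after a compactly supported Hamiltonian perturbation making intersections transverse. The hypothesis $N_K>3$, together with the fact that $N_V$ divides each $N_{K,L_i}$, prevents disc bubbles of Maslov index at most $2$ from appearing in the relevant moduli spaces, so the standard monotone Floer machinery applies; matching the relative spin structures by the induced ones guarantees that signs and local coefficient systems agree on both sides of the identifications below.

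Next, I would pick three model curves $\gamma_0,\gamma_1,\gamma_2\subset\mathbb{C}$, each a horizontal line lying inside one of the three strips over which the ends of $V$ are cylindrical: $\gamma_0$ above the single end $L$, and $\gamma_i$ above $L_i$. For such a curve, the intersection points of $V$ and $\gamma_i\times K$ correspond bijectively to the intersection points of $L_i$ and $K$, and a neck-stretching argument in the $\mathbb{C}$-factor identifies the moduli spaces of Floer strips for $(V,\gamma_i\times K)$ with those for $(L_i,K)$, yielding chain-level isomorphisms
\begin{equation*}
HF_{*}(V,\gamma_0\times K;\Lambda)\cong HF_{*}(L,K;\Lambda),\qquad HF_{*}(V,\gamma_i\times K;\Lambda)\cong HF_{*}(L_i,K;\Lambda).
\end{equation*}

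Finally, I would arrange $\gamma_0,\gamma_1,\gamma_2$ so that their ``concatenation'' bounds a triangular region in $\mathbb{C}$, meaning that a compactly supported Hamiltonian isotopy deforms $\gamma_0$ into a curve homotopic to $\gamma_1$ followed by $\gamma_2$, crossing each strand of $\pi(V)$ exactly once. Counting pseudoholomorphic triangles in $M\times\mathbb{C}$ with boundary on $V$, $\gamma_1\times K$, and $\gamma_2\times K$ then produces continuation and connecting maps which, translated through the identifications above, fit into the desired exact triangle
\begin{equation*}
\cdots\to HF_{*}(K,L_2;\Lambda)\to HF_{*}(K,L_1;\Lambda)\to HF_{*}(K,L;\Lambda)\to HF_{*-1}(K,L_2;\Lambda)\to\cdots,
\end{equation*}
the shift by $-1$ recording the Maslov index contributed by the bounding triangle in $\mathbb{C}$. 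The main obstacle will be the compactness and transversality of these triangle moduli spaces and the verification that the connecting map really is induced by the triangle count; this is exactly where the numerical conditions $N_K>3$ and equal monotonicity constants, together with the induced relative spin structures, are indispensable.
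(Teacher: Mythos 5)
The paper cites this theorem from Biran--Cornea \cite{BC2} and Haug \cite{Haug} without reproducing a proof, so there is no in-paper argument to compare against. Your strategy of probing $V$ with product Lagrangians $\gamma\times K$ is the right one, and your middle step, identifying $HF_{*}(V,\gamma_{i}\times K;\Lambda)$ with $HF_{*}(K,L_{i};\Lambda)$ for a curve $\gamma_i$ sitting in the $i$-th cylindrical strip, is correct and consistent with the references.

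The gap is in the final step. The three curves $\gamma_0,\gamma_1,\gamma_2$ you describe lie in different strips and hence are disjoint, so $\gamma_1\times K$ and $\gamma_2\times K$ have empty intersection; a holomorphic triangle with boundary on $V$, $\gamma_1\times K$, and $\gamma_2\times K$ would need a corner where its boundary passes from $\gamma_1\times K$ to $\gamma_2\times K$, and there is no such point. The mechanism in \cite{BC2} and \cite{Haug} is not a triangle count but a filtration. One takes a single curve $\gamma$, compactly supported Hamiltonian isotopic to $\gamma_0$, which crosses the two strands of $\pi(V)$ over the $(L_1,L_2)$-ends once each, at well-separated positions in $\mathbb{C}$. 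The generators of $CF(\gamma\times K,V)$ then split into two groups according to which end they lie over; projecting Floer strips to the $\mathbb{C}$-factor and invoking the open mapping theorem shows that the Floer differential is upper-triangular with respect to this splitting. That yields a short exact sequence of complexes with associated graded pieces $CF(K,L_1)$ and $CF(K,L_2)$, hence the long exact sequence, while the isotopy from $\gamma_0$ identifies the total homology with $HF_{*}(K,L)$. This also explains why $L_1$ and $L_2$ enter the sequence asymmetrically: the ordering is fixed by the heights of the two strands in $\mathbb{C}$, which is exactly the structure the paper later exploits, whereas a triangle count with boundary on $\gamma_1\times K$ and $\gamma_2\times K$ would be symmetric in the two ends and is therefore not the right device even if the moduli spaces were nonempty.
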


Note that all the mentioned Lagrangians are orientable. This means that the minimal Maslov numbers are even and they all are divisible by $2$.

\subsection{Clean intersection and surgery}

Let $M$ be a closed symplectic manifold and $L_1, L_2$ be orientable monotone Lagrangian submanifolds.  Recall that the intersection $S = L_1 \cap L_2$ is called clean if $S$ is a manifold and $T_p L_1 \cap T_pL_2 = T_pS$ for any $p\in S$.

\begin{theorem}\label{surgerycobirdism}(see \cite[Section 2.2 and Section 6.1]{Cheuk})
Assume that $S = L_1 \cap L_2$ is clean and \textbf{connected}. Assume that $\pi_1(M) = 0$. Then we can do a surgery and get a monotone Lagrangian $L = L_1 \#_S L_2 \subset M$ such that there is a monotone cobordism
\begin{equation*}
V: L \rightsquigarrow (L_1, L_2)
\end{equation*}
\end{theorem}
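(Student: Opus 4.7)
The plan is to carry out the construction locally around the clean intersection $S$ and then transport it back into $M$, using a one-parameter family of surgeries to produce the cobordism as its Lagrangian suspension in $M \times \mathbb{C}$. First I would use a version of Weinstein's neighborhood theorem adapted to cleanly intersecting pairs: a neighborhood of $S$ in $M$ is symplectomorphic to a neighborhood of the zero section in $T^*L_1$ restricted to $S$, in which $L_1$ appears as the zero section and $L_2$ appears as the conormal bundle $\nu^*(S \subset L_1)$, fiberwise transverse to $L_1$ along $S$. This reduces the problem to the fiber direction, where in each cotangent fiber over a point of $S$ one has two transverse linear Lagrangians $\mathbb{R}^k$ and $i\mathbb{R}^k$ in $\mathbb{C}^k$ (with $k = \operatorname{codim}_{L_1}S$).

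Second, I would apply the Polterovich surgery in this fiber direction with a carefully chosen handle profile $\gamma \subset \mathbb{C}$, namely a smooth curve asymptotic to the positive real axis and the positive imaginary axis that resolves the corner at $0$. Extended over $S$, the Lagrangian $L_1 \#_S L_2$ is obtained by replacing the two local pieces with the family of $\gamma$-handles parametrized by $S$; this is precisely the Mak--Wu construction cited from \cite{Cheuk}, and gives an embedded Lagrangian in $M$.

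Third, I would build $V$ as the Lagrangian suspension of a one-parameter family of such surgeries. Concretely, one chooses a family of curves $\gamma_t \subset \mathbb{C}$ interpolating between the cornered curve $\mathbb{R}_{\geqslant 0} \cup i\mathbb{R}_{\geqslant 0}$ (giving $L_1 \cup L_2$) and the smoothed handle (giving $L$), and then records the position of the handle parameter in the $\mathbb{C}$-factor of $M \times \mathbb{C}$. Outside a compact set in $\mathbb{C}$ this produces the two horizontal ends $L_1$, $L_2$ and the single end $L$, so the projection has the cobordism form required in Section~\ref{Lagrdef}.

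The main obstacle will be verifying monotonicity of both $L$ and $V$, with the same monotonicity constant as $L_1$ and $L_2$. The hypothesis $\pi_1(M) = 0$ is crucial here: it lets one compute $\mu$ and $\omega$ on $\pi_2(M, L)$ (respectively $\pi_2(M \times \mathbb{C}, V)$) by choosing representative disks adapted to the surgery, and reduces the monotonicity identity to an $H_2$-level computation. For any closed disk class, $\mu$ and $\omega$ decompose as contributions from $L_1$, $L_2$, and the local handle; the Maslov contribution of the handle is computed from the winding of $\gamma$ around $0$, and the symplectic area contribution from the area enclosed by $\gamma$. By rescaling the handle profile $\gamma$ (and the family $\gamma_t$), both contributions scale proportionally, and one can arrange the monotonicity ratios for $L$ and $V$ to coincide with the common monotonicity constant of $L_1, L_2$; this is the step where care is needed, and it is the heart of the argument in \cite{Cheuk}.
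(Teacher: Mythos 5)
The paper does not give its own proof of this statement: it is imported verbatim from Mak--Wu \cite{Cheuk}, and the author's only contribution at this point is to verify the hypotheses (cleanness and connectedness of $S = L_1 \cap L_2$, which is Lemma~\ref{cleanintersection}; and $\pi_1(\mathbb{C}P^7)=0$). So there is no internal argument to compare against, and your sketch should be judged as a reconstruction of the cited result.

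Your first two steps are essentially right: the Pozniak/Weinstein model near a clean intersection, identifying $L_2$ with the conormal of $S \subset L_1$ in $T^*L_1$, and the flow-handle (Polterovich-type) surgery along $S$ are indeed what \cite{Cheuk} does. However, your third step mischaracterizes the construction of $V$. You cannot build $V$ as the \emph{Lagrangian suspension} of the family $\gamma_t$ interpolating from the corner $\mathbb{R}_{\geqslant 0} \cup i\mathbb{R}_{\geqslant 0}$ to the smoothed handle: suspension requires a Hamiltonian isotopy of embedded Lagrangians, and your family degenerates to the singular union $L_1 \cup L_2$ at one end, so it is not an isotopy at all. The actual construction (going back to Biran--Cornea \cite{Octav1}, Section~6, and extended to clean intersections in \cite{Cheuk}) performs the surgery directly in the product $M \times \mathbb{C}$: one takes $L_1 \times \gamma_1$ and $L_2 \times \gamma_2$ for two curves $\gamma_1, \gamma_2 \subset \mathbb{C}$ meeting transversally at a single point $q$, notes that these two Lagrangians intersect cleanly along $S \times \{q\}$, and applies the flow-handle surgery to that clean intersection inside $M \times \mathbb{C}$. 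The ends of the resulting Lagrangian $V$ then project to the required horizontal rays, giving the cobordism $L \rightsquigarrow (L_1, L_2)$, and the apparent ``family of handle profiles'' is what one \emph{sees} in the fibers, not the input datum of the construction. Your discussion of monotonicity is reasonable in outline; the role of $\pi_1(M)=0$ is to control $\pi_2(M,L)$ (and $\pi_2(M\times\mathbb{C},V)$) after surgery so that the Maslov and area contributions decompose into pieces coming from $L_1$, $L_2$, and the handle, and so that a suitable scaling of the handle profile produces the common monotonicity constant; this matches the structure of the argument in \cite{Cheuk}.
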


\vspace*{0.06in}

Let us note the following

\vspace{0.04in}
\noindent
- By definition, $V \subset M \times \mathbb{C}$ and the projection $V \rightarrow \mathbb{C}$ has the form shown in the following figure

  \begin{figure}[H]
  \centering
  \includegraphics[width=0.55\linewidth]{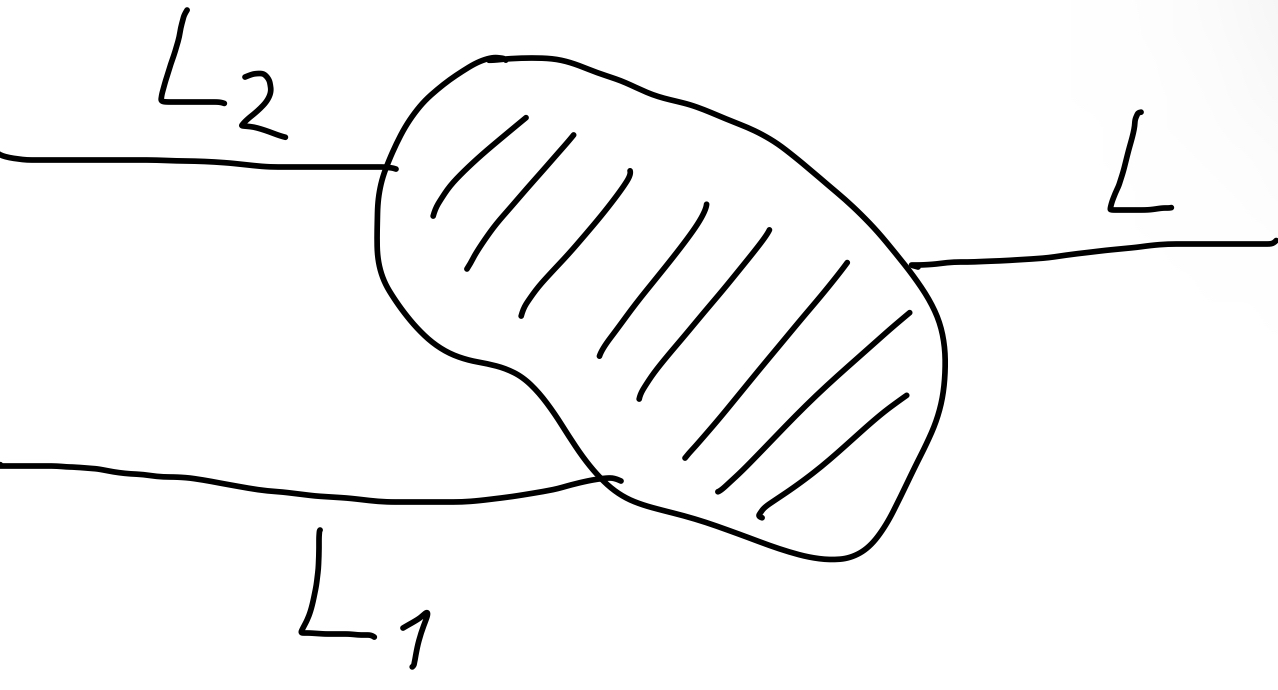}
\end{figure}

\vspace{0.04in}
\noindent
- The condition that $S$ is connected is crucial.

\vspace{0.04in}
\noindent
- $V$ is homotopy equivalent to $L_1 \cup L_2$, i.e. $V$ is homotopy equivalent to $L_1$ attached to $L_2$ along $S$.

\vspace{0.04in}
\noindent
- Theorem $\ref{longexactseq}$ says that there is a long exact sequence related to Lagrangians $L_1, L_2, L_1 \#_S L_2$.

\vspace{0.04in}
\noindent
- $L_1 \#_S L_2$ may not be Hamiltonian isotopic to $L_2 \#_S L_1$.

\vspace{0.08in}

The reader can find more details about Lagrangian surgery in \cite{Hicks}.

\section{Construction of noncommutative Lagrangian cobordisms}

Let $L_2 = \mathbb{R}P^{7} \subset \mathbb{C}P^{7}$. In this section we construct a monotone spin Lagrangian $L_1 \subset \mathbb{C}P^{7}$ and a monotone spin Lagrangian cobordism $V: L \rightsquigarrow (L_1, L_2)$. Then, we prove that there is no monotone spin Lagrangian cobordism between $L$ and $(L_2, L_1)$. The construction consists of the following steps:

\vspace{0.07in}
\noindent
- We construct $L_1$ and show that the intersection $S = L_1 \cap L_2$ is clean and connected. Then, Theorem $\ref{surgerycobirdism}$ says that we can do the surgery and get a monotone Lagrangian $L = L_1 \#_S L_2$ and a monotone Lagrangian cobordism $V: L \rightsquigarrow (L_1, L_2)$.

\vspace{0.05in}
\noindent
- We prove that the Lagrangians $V, L, L_1, L_2$ are spin.

\vspace{0.05in}
\noindent
- Using the cobordism $V: L \rightsquigarrow (L_1, L_2)$ and the long exact sequence from Theorem $\ref{longexactseq}$ we prove that there is no spin monotone cobordism from $L$ to $(L_2, L_1)$.

\subsection{Construction of the Lagrangian $L_1$}\label{mainLagr}

Consider a submanifold $\mathcal{R} \subset \mathbb{R}P^{7}$  defined by a quadric

\begin{equation}\label{equationemb}
u_1^2 + u_2^2 + u_3^2 + u_4^2  - u_5^2 - u_6^8 - u_7^2 - u_8^2 = 0
\end{equation}
and $\widetilde{\mathcal{R}} \subset \mathbb{R}^{8}$ defined by a system
\begin{equation*}
\left\{
 \begin{array}{l}
u_1^2 + u_2^2 + u_3^2 + u_4^2 + u_5^2 + u_6^2 + u_7^2 + u_8^2 = 1 \\
u_1^2 + u_2^2 + u_3^2 + u_4^2  - u_5^2 - u_6^8 - u_7^2 - u_8^2 = 0
 \end{array}
\right.
\end{equation*}
We see that $\widetilde{\mathcal{R}} = S^{3} \times S^3$ and $\mathcal{R} = (S^{3} \times S^3)/\mathbb{Z}_2$, where we identify antipodal points. It is known that
\begin{equation*}
\mathcal{R} = (S^{3} \times S^3)/\mathbb{Z}_2 = SO(4) = SO(3) \times S^3 = \mathbb{R}P^3 \times S^3.
\end{equation*}
Let $S^1 = \{e^{i\pi\varphi}|\; 0 \leqslant \varphi \leqslant 2 \}$ be the unit circle.  Consider $\mathcal{R} \times S^1$ and a map
\begin{equation}\label{embformula}
\begin{gathered}
\psi: \mathcal{R} \times S^1 \rightarrow \mathbb{C}P^{3}, \\
\psi(u_1, u_2, u_3, u_4, u_5, u_6, u_7, u_8, \varphi) = \\
 [u_1e^{\pi\varphi}: u_2e^{\pi\varphi} : u_3e^{\pi\varphi}: u_4e^{\pi\varphi}: u_5: u_6: u_7: u_8].
\end{gathered}
\end{equation}
In fact, $\psi$ is not embedding because
\begin{equation*}
\psi(u_1, u_2, u_3, u_4, u_5, u_6, u_7, u_8, \varphi) = \psi(-u_1, -u_2, -u_3, -u_4, u_6, u_6, u_7, u_8, \varphi + 1)
\end{equation*}
Define an action of $\mathbb{Z}_2$ on $\mathcal{R} \times S^1$ by the following formula
\begin{equation*}
\begin{gathered}
\varepsilon\cdot (u_1, u_2, u_3, u_4, u_5, u_6, u_7, u_8, \varphi) = \\
((-1)^{\varepsilon}u_1, (-1)^{\varepsilon}u_{2}, (-1)^{\varepsilon}u_3, (-1)^{\varepsilon}u_{4}, u_5, u_6, u_7, u_8, \varphi + 1), \;\;\; \varepsilon \in \mathbb{Z}_2.
\end{gathered}
\end{equation*}
We see that
\begin{equation*}
\psi(\varepsilon\cdot (u_1, u_2, u_3, u_4, u_5, u_6, u_7, u_8, \varphi)) = \psi(u_1, u_2, u_3, u_4, u_5, u_6, u_7, u_8, \varphi)
\end{equation*}
and  the action of  $\mathbb{Z}_2$ is free on the second factor of $\mathcal{R} \times S^1$. This means that we have a fibration
\begin{equation*}
(\mathcal{R} \times S^1)/\mathbb{Z}_2 \xrightarrow{\mathcal{R}} S^1/\mathbb{Z}_2 = \mathbb{R}P^1 =  S^1.
\end{equation*}

Denote
\begin{equation*}
L_1 = \psi\big((\mathcal{R} \times S^1)/\mathbb{Z}_2\big) \subset \mathbb{C}P^{7},
\end{equation*}
where $\psi$ is defined on the quotient by the same formula $(\ref{embformula})$.

\begin{theorem} (see \cite[Theorem 1.3]{Vardan})
We have

\vspace{0.04in}
\noindent
- $L_1$ is diffeomorphic to $\mathcal{R} \times S^1 = \mathcal{R}P^3 \times S^3 \times S^1$. In particular $L_1$ is spin.

\vspace{0.04in}
\noindent
- $L_1$ is monotone embedded Lagrangian of $\mathbb{C}P^7$ with minimal Maslov number $N_L = 4$.
\end{theorem}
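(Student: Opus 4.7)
The plan is to attack the statement in three stages: identify the diffeomorphism type of $Q:=(\mathcal{R}\times S^1)/\mathbb{Z}_2$ (and verify $\psi$ descends to an embedding on $Q$), check the Lagrangian condition, and finally compute monotonicity together with the Maslov number.

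For the topological stage, I first verify that $\psi$ is injective on $Q$. If $\psi(u,\varphi)=\psi(u',\varphi')$, there exists $\lambda\in\mathbb{C}^{*}$ with $u_j e^{i\pi\varphi}=\lambda u_j' e^{i\pi\varphi'}$ for $j\leq 4$ and $u_j=\lambda u_j'$ for $j\geq 5$; since the $u_j'$ are real and $u_5,\ldots,u_8$ cannot all vanish on $\tilde{\mathcal{R}}$, one forces $\lambda\in\{\pm 1\}$, and a case analysis confirms that $(u',\varphi')$ lies in the $\mathbb{Z}_2\times\mathbb{Z}_2$-orbit of $(u,\varphi)$ generated by the antipodal map on $\tilde{\mathcal{R}}$ together with the combined shift $\varphi\mapsto\varphi+1$ and negation of the first four coordinates. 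Next I identify $Q$ with the mapping torus of the involution $\iota:\mathcal{R}\to\mathcal{R}$ given by $\iota(u_1,\ldots,u_8)=(-u_1,\ldots,-u_4,u_5,\ldots,u_8)$. On the double cover $\tilde{\mathcal{R}}=S^3\times S^3\subset\mathbb{H}\oplus\mathbb{H}$, the path $(v,w)\mapsto(e^{i\pi t}v,w)$ obtained by left quaternion multiplication isotopes $\mathrm{id}$ to $(v,w)\mapsto(-v,w)$ and commutes with the antipodal action; it therefore descends to an isotopy of $\iota$ to $\mathrm{id}_{\mathcal{R}}$, so the mapping torus is trivial and $L_1\cong\mathcal{R}\times S^1=\mathbb{R}P^3\times S^3\times S^1$, which is spin as a product of spin factors.

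For the Lagrangian condition I pull back $\omega_{FS}$. On the prequantum sphere $S^{15}$, $\pi^{*}\omega_{FS}=\tfrac{i}{2}\sum dz_j\wedge d\bar z_j$, and substituting $z_j=u_j e^{i\pi\varphi}$ for $j\leq 4$ and $z_j=u_j$ for $j\geq 5$, a direct calculation yields
\begin{equation*}
\psi^{*}\omega_{FS} \;=\; \tfrac{\pi}{2}\, d\!\left(\sum_{j=1}^{4}u_j^{2}\right)\wedge d\varphi,
\end{equation*}
which vanishes because $\sum_{j=1}^{4}u_j^{2}=\tfrac{1}{2}$ is constant on $\tilde{\mathcal{R}}$. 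Combined with $\dim L_1=7=\tfrac{1}{2}\dim_{\mathbb{R}}\mathbb{C}P^7$, this shows $L_1$ is Lagrangian.

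The main obstacle is computing $N_{L_1}=4$ together with monotonicity. From the sequence $\pi_2(\mathbb{C}P^7)\to\pi_2(\mathbb{C}P^7,L_1)\to\pi_1(L_1)\to 0$, the kernel contributes multiples of $\mu([\mathbb{C}P^1])=16$, and $\pi_1(L_1)=\mathbb{Z}_2\oplus\mathbb{Z}$, so it suffices to exhibit discs bounding the two $\pi_1$-generators. For the $S^1$-generator, the holomorphic disc $D(z)=[u_1 z:\ldots:u_4 z:u_5:\ldots:u_8]$ meets a generic hyperplane transversely at one point, yielding $\mu(D)=8$; since $\partial D$ wraps twice around the $S^1$-factor of $L_1$, the generator has Maslov index $\in\{4,12\}\pmod{16}$. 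A companion disc built from a complex line through two $\mathbb{Z}_2$-related points of the $\mathbb{R}P^3$-factor of $\mathcal{R}\subset L_1$, analogous to the standard $\mathbb{R}P^n$-disc, yields a further Maslov index whose gcd with the previous data is $4$. Monotonicity follows in parallel by computing the symplectic areas of these discs, which match the required relation $\mu=(16/\pi)\,\omega_{FS}$. The delicate point is to verify minimality: namely, that no disc has smaller positive Maslov index. This amounts to showing that the Maslov class in $H^1(L_1;\mathbb{Z}/16)$ takes values in $4\mathbb{Z}/16$, which follows once the explicit discs above are known to cover $\pi_1(L_1)$.
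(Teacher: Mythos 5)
This theorem is quoted from \cite[Theorem 1.3]{Vardan} rather than proved in the paper, so I can only evaluate your argument on its own terms. Your topological stage (checking injectivity of $\psi$ modulo the $\mathbb{Z}_2\times\mathbb{Z}_2$ identifications, recognizing $(\mathcal{R}\times S^1)/\mathbb{Z}_2$ as the mapping torus of $\iota$, and trivializing it via the left--quaternion isotopy) is correct, and so is your Lagrangian computation $\psi^{*}\omega_{FS}=\tfrac{\pi}{2}d\bigl(\sum_{j\leq 4}u_j^2\bigr)\wedge d\varphi = 0$.

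The genuine gap is in the Maslov/monotonicity stage. The assertion that the disc $D(z)=[u_1z:\cdots:u_4z:u_5:\cdots:u_8]$ ``meets a generic hyperplane transversely at one point, yielding $\mu(D)=8$'' is not a valid deduction. There is no formula relating the Maslov index of a disc to its intersection number with a generic hyperplane: a hyperplane $H$ has real codimension $2$ and $L_1$ has real codimension $7$, so $L_1\cap H\neq\emptyset$ for any $H$, and $D\cdot H$ is not even a homotopy invariant of the disc (one can check explicitly that for $D$ above this number flips between $0$ and $1$ as $H$ varies). Intersection--with--divisor formulas for $\mu$ require an \emph{anticanonical} divisor $\Sigma$ disjoint from $L_1$ together with a compatibility condition between the trivialization of $K_{\mathbb{C}P^7}|_{L_1}$ induced by $\Sigma$ and the Lagrangian trivialization; choosing the ``wrong'' $\Sigma$ gives the wrong answer. (Indeed, the quadrics $\{\sum_{j\leq4}z_j^2 = c\sum_{j\geq5}z_j^2\}$ with $|c|\neq1$ are disjoint from $L_1$, and a naive degree-$2$ version of the formula applied to $c=2$ gives $D\cdot\Sigma=0$, a contradiction. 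The correct choice is a degree-$8$ divisor such as $\Sigma = 2\{\sum_{j\leq4}z_j^2=0\}+2\{\sum_{j\geq5}z_j^2=0\}$, against which one verifies the required phase compatibility and then computes $D\cdot\Sigma=4$, so $\mu(D)=8$.) As written, $\mu(D)=8$ is unsupported, and since monotonicity is only ``followed in parallel,'' it is likewise unsupported.

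Two further weaknesses, even granting $\mu(D)=8$: (i) your ``companion disc'' bounding the $\mathbb{Z}_2$-generator is neither constructed nor its Maslov index computed; its value is actually irrelevant, since for a torsion class $\beta$ with $2\beta=0$ one automatically has $\bar\mu(\beta)\in\{0,8\}\pmod{16}$, so attributing the gcd drop to $4$ to this disc is a logical misstep — the drop comes from $\partial D=2\alpha$ forcing $\bar\mu(\alpha)\in\{4,12\}$. (ii) You should also justify that $[\partial D]=2\alpha$ exactly (rather than $2\alpha+\beta$) under the mapping-torus trivialization; this follows because the trace of the quaternionic isotopy is a nullhomotopic loop in $\mathcal{R}$, but it is worth saying. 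With a correct Maslov computation and these points tightened, your overall strategy (computing $\bar\mu:\pi_1(L_1)\to\mathbb{Z}/16$ on generators and taking the gcd with $16$) would give $N_{L_1}=4$.
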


Let us prove the following simple lemma.

\begin{lemma}\label{cleanintersection}
Consider the standard $\mathbb{R}P^{7} \subset \mathbb{C}P^{7}$. We have $L_1 \cap \mathbb{R}P^{7} = \mathcal{R}$ and $L_1 \cap \mathbb{R}P^{7}$ is clean.
\end{lemma}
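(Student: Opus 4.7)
My plan is to first identify the intersection $L_1\cap\mathbb{R}P^7$ as a set, and then verify cleanness by a direct computation of tangent spaces using the parametrization $\psi$.

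For the set-theoretic identification, I analyze when $\psi(u,\varphi)$ lies in $\mathbb{R}P^7$. Its ambient homogeneous coordinates are $(u_1e^{i\pi\varphi},\dots,u_4e^{i\pi\varphi},u_5,\dots,u_8)$. The defining equations give $u_1^2+\cdots+u_4^2=u_5^2+\cdots+u_8^2=\tfrac12$, so both blocks of coordinates contain nonzero entries. Reality of the projective point forces a common rescaling $\lambda\in\mathbb{C}^*$; the last block forces $\lambda\in\mathbb{R}^*$, and the first block then forces $e^{i\pi\varphi}\in\mathbb{R}$, hence $\varphi\in\{0,1\}$. The $\mathbb{Z}_2$-action on $\mathcal{R}\times S^1$ identifies the slice $\varphi=0$ with the slice $\varphi=1$, and both descend to the image of $\widetilde{\mathcal{R}}$ under $u\mapsto[u]\in\mathbb{R}P^7$. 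Since the antipodal identification in $\mathbb{R}P^7$ coincides with the $\mathbb{Z}_2$-quotient defining $\mathcal{R}=\widetilde{\mathcal{R}}/\mathbb{Z}_2$, this image equals $\mathcal{R}$, which is a smooth manifold.

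For cleanness, I fix $p=[u]\in\mathcal{R}$ (represented by $\varphi=0$) and use the identification $T_{[u]}\mathbb{C}P^7=u^{\perp_{\mathbb{C}}}\subset\mathbb{C}^8$ (Hermitian orthogonal complement). The differential of $\psi$ sends $T_u\widetilde{\mathcal{R}}$ to itself, producing real tangent vectors in $u^\perp\cap\mathbb{R}^8=T_p\mathbb{R}P^7$; these span precisely $T_p\mathcal{R}$. The direction $\partial_\varphi$ maps to $i\pi(u_1,u_2,u_3,u_4,0,0,0,0)$, and after projecting out its $\mathbb{C}u$-component one obtains a purely imaginary vector proportional to $(u_1,u_2,u_3,u_4,-u_5,-u_6,-u_7,-u_8)$. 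Thus $T_pL_1$ decomposes as the direct sum of the real subspace $T_p\mathcal{R}$ and a single purely imaginary direction. Intersecting with $T_p\mathbb{R}P^7\subset\mathbb{R}^8$, the imaginary component must vanish, yielding $T_pL_1\cap T_p\mathbb{R}P^7=T_p\mathcal{R}$, which is the cleanness condition.

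The one point requiring care is the projection of the $\partial_\varphi$ derivative onto $u^{\perp_{\mathbb{C}}}$: the naive vector $i\pi(u_1,u_2,u_3,u_4,0,0,0,0)$ is not orthogonal to $u$, so I must subtract its $\mathbb{C}u$-component, and then confirm that what remains is still purely imaginary (not merely non-real). The explicit computation does yield a vector in $i\mathbb{R}^8$, which meets $\mathbb{R}^8$ only at the origin; this is exactly what makes the imaginary direction disjoint from $T_p\mathbb{R}P^7$ and forces the tangent intersection down to $T_p\mathcal{R}$.
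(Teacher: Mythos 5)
Your proposal is correct and follows essentially the same approach as the paper: identify the intersection set directly from the formula for $\psi$, then examine the tangent spaces via the parametrization, observing that the $\partial_\varphi$-direction is transverse to the real locus. The paper's version is terser (it simply asserts that the local parametrization by real functions $f_j$ and the phase $e^{i\pi\varphi}$ yields the cleanness), whereas you explicitly carry out the projection of $i\pi(u_1,\dots,u_4,0,\dots,0)$ onto $u^{\perp_{\mathbb{C}}}$ and verify, using the quadric constraint $\sum_{j\le 4}u_j^2=\tfrac12$, that the result is genuinely purely imaginary; this extra care closes a small gap the paper leaves implicit.
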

\begin{proof}
The fact that $L_1 \cap \mathbb{R}P^{7} = \mathcal{R}$ follows directly from map $(\ref{embformula})$. Consider $p \in L_1 \cap \mathbb{R}P^{7}$ and a small neighborhood $U \subset L_1$  of $p$. The Lagrangian $L_1$ in $U$ can be parameterized in the following way
\begin{equation*}
L_1 = [f_1e^{i\pi\varphi}: f_2e^{i\pi\varphi}: f_3e^{i\pi\varphi}: f_4e^{i\pi\varphi}: f_5: f_6: f_7: f_8], \\
\end{equation*}
where $f_j$ are real-valued functions. This shows that $T_pL_1 \cap T_p\mathbb{R}P^{7} = T_p\mathcal{R}$.
\end{proof}

\subsection{Construction and properties of the cobordism}\label{cobconstr}

In the previous section we constructed  $L_1 = \mathcal{R} \times S^1$. Denote $L_2 = \mathbb{R}P^{7}$. Lemma \ref{cleanintersection} says that $L_1 \cap L_2 = \mathcal{R}$ and  is clean. We get from Theorem $\ref{surgerycobirdism}$ that there exists a monotone Lagrangian and a cobordism
\begin{equation*}
L = L_1 \#_{\mathcal{R}} L_2, \quad V: L \rightsquigarrow (L_1, L_2).
\end{equation*}
Let $j_1: L_1 \hookrightarrow V$ and $j_2: L_2 \hookrightarrow V$ be embedding as ends, i.e.
\begin{equation*}
j_1(L_1) = \{c\} \times \{1\} \times L_1, \;\;\; j_2(L_2) = \{c\} \times \{2\} \times L_2
\end{equation*}
for some big $c \in \mathbb{R}$. We also have an embedding $j$ of the disjoint union
\begin{equation*}
j: L_1 \coprod L_2 \hookrightarrow V, \;\;\; j(L_1 \coprod L_2) = (\{c\} \times \{1\} \times L_1) \coprod (\{c\} \times \{2\} \times L_2).
\end{equation*}

Let $i_1: \mathcal{R} \rightarrow L_1$ be the embedding as the first factor and $i_2: \mathcal{R} \rightarrow L_2$ be the embedding as the set of solutions of the quadric.

\begin{theorem}
The Lagrangians $V$ and $L$ are orientable and spin.
\end{theorem}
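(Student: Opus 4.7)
The strategy is to first establish that $V$ is spin using a Mayer--Vietoris computation, and then to deduce that $L$ is spin by treating it as a boundary component of the cobordism $V$.

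Since $w(\mathbb{R}P^n) = (1+a)^{n+1}$, for $n = 7$ we have $w(\mathbb{R}P^7) = 1 + a^8 = 1$ in $H^*(\mathbb{R}P^7;\mathbb{Z}_2)$, so $L_2 = \mathbb{R}P^7$ is spin; the same calculation shows $\mathbb{R}P^3$ is spin, and therefore $L_1 \cong \mathbb{R}P^3 \times S^3 \times S^1$ is spin as a product of spin factors. The bullet list after Theorem \ref{surgerycobirdism} furnishes a homotopy equivalence $V \simeq L_1 \cup_{\mathcal{R}} L_2$, for which the Mayer--Vietoris sequence with $\mathbb{Z}_2$ coefficients reads
\begin{equation*}
H^{j-1}(L_1) \oplus H^{j-1}(L_2) \xrightarrow{i_1^* - i_2^*} H^{j-1}(\mathcal{R}) \to H^j(V) \xrightarrow{(j_1^*, j_2^*)} H^j(L_1) \oplus H^j(L_2).
\end{equation*}
At each end the cobordism looks like $L_k \times \mathbb{R}$, so $TV|_{L_k} \cong TL_k \oplus \epsilon^1$ and consequently $w_j(V)|_{L_k} = w_j(L_k) = 0$ for $j = 1, 2$. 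It therefore suffices to show that $(j_1^*, j_2^*)$ is injective in degrees $1$ and $2$, which by exactness reduces to surjectivity of $i_1^* - i_2^*$ in degrees $0$ and $1$. Since $i_1: \mathcal{R} \hookrightarrow \mathcal{R} \times S^1 = L_1$ is the inclusion of a factor, the Künneth formula makes $i_1^*$ alone surjective in every degree, so $i_1^* - i_2^*$ is surjective as required. This forces $w_1(V) = w_2(V) = 0$, i.e. $V$ is orientable and spin.

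Finally, truncating $V$ at large values of the $\mathbb{C}$-coordinate yields a compact cobordism with boundary $L \sqcup L_1 \sqcup L_2$. The spin structure on the truncation restricts to a spin structure on each boundary component, in particular on $L$. The main obstacle is the middle paragraph: once the pushout description $V \simeq L_1 \cup_{\mathcal{R}} L_2$ is granted by Theorem \ref{surgerycobirdism}, the residual work is the cohomology bookkeeping above---especially the surjectivity of $i_1^*$---and the identification $TV|_{L_k} \cong TL_k \oplus \epsilon^1$ coming from the cylindrical form of $V$ near each end.
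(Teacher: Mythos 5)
Your proposal is correct and follows essentially the same route as the paper: both use the product structure of $V$ near its ends to identify the restricted Stiefel--Whitney classes with those of the boundary Lagrangians, and both establish injectivity of $(j_1^*, j_2^*)$ via Mayer--Vietoris on $V \simeq L_1 \cup_{\mathcal{R}} L_2$, using that $i_1^*: H^*(L_1;\mathbb{Z}_2) \to H^*(\mathcal{R};\mathbb{Z}_2)$ is surjective because $\mathcal{R}$ is a factor of $L_1$. Your treatment of $L$ by restricting the spin structure of the truncated cobordism to its boundary is the same observation the paper phrases as $h^*TV = TL \times \mathbb{R}$, so the two are equivalent; you also supply slightly more detail (the $(1+a)^{n+1}$ computation and the K\"unneth argument) than the paper records explicitly.
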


\begin{proof}
Let $U \subset V$ be a small neighborhood of $\{c\} \times \{1\} \times L_1$ in $V$. By definition $U = I \times L_1$, where $I$ is some small interval. The same is true about some small neighborhood of $\{c\} \times \{2\} \times L_2$. This means that
\begin{equation*}
j_1^{*}TV = TL_1 \times \mathbb{R}, \;\;\; j_2^{*}TV = TL_2 \times \mathbb{R}.
\end{equation*}

Let $w_1(V)$, $\omega_2(V)$ be the Stiefel-Whitney classes of $V$. The arguments above show that $\omega_k(j_i^{*}TV) = \omega_k(L_i)$ for $i,k = 1, 2$.  Since $L_1$, $L_2$ are orientable and spin we have
\begin{equation*}
j^{*}w_k(V) = w_k(L_1) + w_k(L_2) = 0, \quad k=1,2.
\end{equation*}

\begin{lemma}
The pullback $j^{*}$ is injective on $H^{k}(V; \mathbb{Z}_2)$ for  $k=1, 2$.
\end{lemma}

\begin{proof}
 Since $V$ is homotopy equivalent to $L_1 \cup L_2$ and $L_1 \cap L_2 = \mathcal{R}$, we have the exact sequence of Mayer-Vietoris 
\begin{equation*}
\rightarrow H^k(V; \mathbb{Z}_2) \xrightarrow{(j_1^{*}, j_2^{*})} H^k(L_1; \mathbb{Z}_2) \oplus H^k(L_2; \mathbb{Z}_2) \xrightarrow{i_1^{*} - i_2^{*}} H^k(\mathcal{R}; \mathbb{Z}_2) \rightarrow
\end{equation*}
Since $i_1^{*}$ is surjective, we obtain that the map $(j_1^{*}, j_2^{*})$ is injective. This means that $j^{*}$ is injective.
\end{proof}
Injectivity of $j^{*}$ and $j^{*}w_k(V) = 0$ imply that $w_k(V) = 0$ for $k=0,1$.

\vspace{0.08in}

Let $h: L \rightarrow V$ be the embedding of $L$ as an end. The same arguments show that $h^{*}TV = TL \times \mathbb{R}$. Therefore, $\omega_k(L) = h^{*}\omega_k(V) = 0$, where $k=1,2$.
\end{proof}

\subsection{Nonexistence of the cobordism $L \rightsquigarrow (L_2, L_1)$}

Recall that we denote $\mathbb{R}P^7$ by $L_2$ and $N_{L_2} = 8$.

\begin{lemma}\label{floerproj}
Let $\Lambda' = \mathbb{Z}[T, T^{-1}]$, where $deg(T) = -8$. For any relative spin structure of $L_2$ and any $k$ we have
\begin{equation*}
HF_{2k}(L_2, L_2; \Lambda') = 0, \quad HF_{2k-1}(L_2, L_2; \Lambda') = \mathbb{Z}_2.
\end{equation*}
\end{lemma}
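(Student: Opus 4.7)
My approach is to run the spectral sequence of Theorem~\ref{basictheorem} applied to the pair $(L_2, L_2) = (\mathbb{R}P^7, \mathbb{R}P^7)$, whose (self)intersection is all of $\mathbb{R}P^7$, which is clean and connected. The hypotheses hold since $\mathbb{R}P^7$ is spin and $N_{L_2} = 8 > 3$, so the spectral sequence converges to $HF_*(L_2, L_2; \Lambda')$. The $E^1$ page is concentrated in columns $p \in 8\mathbb{Z}$, each column being a copy of the integral homology of $\mathbb{R}P^7$: namely $\mathbb{Z}$ in rows $q = 0, 7$, $\mathbb{Z}_2$ in rows $q = 1, 3, 5$, and $0$ in rows $q = 2, 4, 6$.

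Next I would observe that almost all differentials vanish for support reasons. A differential $d^r \colon E^r_{p, q} \to E^r_{p - r, q + r - 1}$ needs $r$ to be a multiple of $8$ (to land in a nonzero column) and $0 \leq q + r - 1 \leq 7$ (to land in the support); the only option is $r = 8$ with $q = 0$. Hence $E^\infty = E^9$ is obtained from $E^1$ by a single map $H_0(\mathbb{R}P^7) = \mathbb{Z} \xrightarrow{\cdot m} \mathbb{Z} = H_7(\mathbb{R}P^7)$ for some integer $m$; all other entries of $E^1$ survive. Each total degree receives at most one nonzero contribution from $E^\infty$, so there are no extension issues, and $HF_n$ equals the corresponding $E^\infty_{0, q}$ with $q \equiv n \pmod 8$, $0 \leq q \leq 7$.

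The main obstacle is pinning down $m$. For this I would invoke the $2$-periodicity $HF_* = HF_{*+2}$ guaranteed by Theorem~\ref{basictheorem}. Since $HF_2 = E^\infty_{0, 2} = 0$ while $HF_0 = E^\infty_{0, 0} = \ker(\cdot m)$, periodicity forces $m \neq 0$. Since $HF_5 = E^\infty_{0, 5} = \mathbb{Z}_2$ while $HF_7 = E^\infty_{0, 7} = \mathbb{Z}/|m|$, periodicity forces $|m| = 2$. Plugging this value back into the $E^\infty$ description yields $HF_{2k} = 0$ and $HF_{2k-1} = \mathbb{Z}_2$ for every $k$, uniformly in the relative spin structure since the entire argument was.
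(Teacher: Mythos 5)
Your proof is correct and follows essentially the same route as the paper: apply the Schmaschke spectral sequence from Theorem~\ref{basictheorem} to the clean self-intersection $L_2\cap L_2=\mathbb{R}P^7$, observe that $d^8$ on column zero is the only possible differential, and use the built-in $2$-periodicity $HF_*=HF_{*+2}$ to finish. The only (cosmetic) difference is in organization: the paper simply reads off $HF_1=E^1_{0,1}=\mathbb{Z}_2$ and $HF_2=E^1_{0,2}=0$ directly from the entries untouched by any differential and then invokes periodicity, whereas you first pin down $|m|=2$ for the lone differential and then read off all entries; the paper's determination of $d^8(1)=\pm 2$ is a byproduct, not an input. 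Both are valid, and your closing remark that the argument is uniform in the relative spin structure is the right observation.
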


\begin{proof}
Since intersection of $L_2$ with itself is clean and $N_{L_2, L_2} = N_{L_2} = 8$, we have the homological spectral sequence from Theorem $\ref{basictheorem}$.

Theorem $\ref{basictheorem}$ says that $HF_m(L_2, L_2; \Lambda') = HF_{m+2}(L_2, L_2; \Lambda')$ for any $m$. So, it is sufficient to find $HF_1(L_2, L_2; \Lambda')$ and $HF_2(L_2, L_2; \Lambda')$. Note the only nontrivial columns are $E_{8k, *}^1$, where $k$ is arbitrary. This means that
\begin{equation*}
\begin{gathered}
0 = E^1_{0, 2} = E^{\infty}_{0, 2} \;\; \Rightarrow \;\; HF_2(L_2, L_2; \Lambda') = 0, \\
\mathbb{Z}_2 = E_{0,1}^{1} = E_{0,1}^{\infty} = HF_1(L_2, L_2; \Lambda').
\end{gathered}
\end{equation*}
We see that $HF_0(L_2, L_2; \Lambda') = 0$ and $HF_7(L_2, L_2; \Lambda') = \mathbb{Z}_2$. Let $d^8$ be the differential of the page $E^8$. We get that
\begin{equation*}
d^8: \; E_{8,0}^8 = \mathbb{Z} \rightarrow E_{0, 7}^8 = \mathbb{Z}
\end{equation*} 
is nontrivial and $d^8(1) = 2$. Change of relative spin structure changes the sign of $d^8(1)$, but this does not change the Floer homology groups.
\end{proof}

In Section $\ref{mainLagr}$ we constructed a monotone spin Lagrangian of $L_1 = \mathbb{R}P^3 \times S^3 \times S^1 \subset \mathbb{C}P^7$ and showed that
\begin{equation*}
L_1 \cap L_2 = \mathcal{R} = \mathbb{R}P^3 \times S^3.
\end{equation*}
Since $N_{L_1} = 4$ and $N_{L_2} = 8$, we get that $N_{L_1, L_2} = 4$.

\begin{lemma}\label{inthomlemma}
Let $\Lambda'' = \mathbb{Z}[T, T^{-1}]$  and $deg(T) = -4$. For any relative spin structure we have two options
\begin{equation*}
\begin{gathered}
HF_0(L_2, L_1; \Lambda'') = HF_1(L_2, L_1; \Lambda'') = 0 \quad \text{or} \\
HF_0(L_2, L_1; \Lambda'') =  HF_{1}(L_2, L_1; \Lambda'') = \mathbb{Z}_2.
\end{gathered}
\end{equation*}
\end{lemma}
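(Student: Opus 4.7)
The plan is to invoke the spectral sequence of Theorem~\ref{basictheorem} for the clean, connected intersection $L_1 \cap L_2 = \mathcal{R} = \mathbb{R}P^3 \times S^3$. By K\"unneth (no $\mathrm{Tor}$ contribution since $S^3$ has torsion-free homology), the integral homology of $\mathcal{R}$ in degrees $0, 1, \ldots, 6$ is $\mathbb{Z}$, $\mathbb{Z}_2$, $0$, $\mathbb{Z}^2$, $\mathbb{Z}_2$, $0$, $\mathbb{Z}$. The spectral sequence has $E^1_{p,q} = H_q(\mathcal{R}; \mathbb{Z})$ for $p \in 4\mathbb{Z}$ and $0$ otherwise, with $d^r : E^r_{p,q} \to E^r_{p-r,\, q+r-1}$, and converges to $HF_*(L_2, L_1; \Lambda'')$. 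A nontrivial differential requires $r \in 4\mathbb{Z}$; for $r \geq 8$ the vertical shift $r-1 \geq 7$ exceeds $\dim \mathcal{R} = 6$, so only $d^4$ survives, with possibly nonzero components $d^4_q : H_q(\mathcal{R}) \to H_{q+3}(\mathcal{R})$ for $q \in \{0, 1, 3\}$.

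Reading the $E^{\infty} = E^5$ page along each total degree gives: $HF_0$ has graded pieces $\ker(d^4_0)$ and $\mathbb{Z}_2/\mathrm{im}(d^4_1)$; $HF_1 = \ker(d^4_1)$; $HF_2 = \mathbb{Z}/\mathrm{im}(d^4_3)$; and $HF_3 = \ker(d^4_3)/\mathrm{im}(d^4_0)$. The main step, and where the real content lies, is using the periodicity $HF_k = HF_{k+2}$ from Theorem~\ref{basictheorem} to force $d^4_0 \neq 0$. Suppose for contradiction that $d^4_0 = 0$: then $HF_3 = \ker(d^4_3)$ is either $\mathbb{Z}^2$ (if $d^4_3 = 0$) or $\mathbb{Z}$, hence of positive rank; but $HF_1 = \ker(d^4_1) \subseteq \mathbb{Z}_2$ has order at most $2$, contradicting $HF_1 \cong HF_3$. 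Therefore $d^4_0 \neq 0$, so $\ker(d^4_0) = 0$ (since $d^4_0 : \mathbb{Z} \to \mathbb{Z}^2$ is $\mathbb{Z}$-linear), and the filtration on $HF_0$ collapses to $HF_0 = \mathbb{Z}_2/\mathrm{im}(d^4_1)$.

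To finish I split on $d^4_1 : \mathbb{Z}_2 \to \mathbb{Z}_2$: if $d^4_1 = 0$ then $HF_0 = HF_1 = \mathbb{Z}_2$, and if $d^4_1$ is the nontrivial map then $HF_0 = HF_1 = 0$. These are precisely the two options claimed. The expected obstacle is the 2-periodicity step above; once $d^4_0$ is forced to be nonzero, everything else follows by direct inspection of the two values of $d^4_1$.
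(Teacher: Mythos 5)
Your proof is correct and follows essentially the same route as the paper: both compute via the spectral sequence of Theorem~\ref{basictheorem} with $E^1$-term $H_*(\mathbb{R}P^3\times S^3;\mathbb{Z})$, observe that only $d^4$ can be nonzero, use the $2$-periodicity $HF_*\cong HF_{*+2}$ together with a rank count to force $d^4\colon H_0\to H_3$ to be injective, and then reduce the answer to the single binary choice of whether $d^4\colon H_1\to H_4$ (the map $\mathbb{Z}_2\to\mathbb{Z}_2$) vanishes. The only difference is organizational: you pin down $d^4_0\neq 0$ once up front and then split on $d^4_1$, whereas the paper splits on $d^4_1$ first and rederives the nonvanishing of $d^4_0$ (and $d^4_3$) inside each branch; your ordering is a little cleaner but is not a different argument.
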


\begin{proof}
Note that the only nontrivial columns are $E_{4k, *}^1$, where $k$ is arbitrary. Therefore, $E^1 = E^4$ and $E^5 = E^{\infty}$. Let $d^4$ be the differential of $E^4$.

\vspace{0.05in}
\textbf{1.} Assume that
\begin{equation*}
d^4: \;\; E_{0, 1}^4 = \mathbb{Z}_2 \rightarrow E_{-4, 4}^4 = \mathbb{Z}_2
\end{equation*}
is not trivial. Hence $E_{0,1}^5 = E_{-4,4}^5 = 0$. Since $E^1_{-4, 5} = H_5(\mathcal{R}; \mathbb{Z}) = 0 = E_{-4, 5}^5$, we obtain
\begin{equation*}
E^5_{0, 1} \oplus E^5_{-4, 5} = 0 =  HF_1(L_2, L_1; \Lambda'').
\end{equation*}
Theorem $\ref{basictheorem}$ says that
\begin{equation*}
0 = HF_1(L_2, L_1; \Lambda'') = HF_{-1}(L_2, L_1; \Lambda'') = E^{\infty}_{-4,3} = E^{5}_{-4,3}  .
\end{equation*}
Since $E^4_{-4,3} = H_3(\mathcal{R}; \mathbb{Z}) = \mathbb{Z} \oplus \mathbb{Z}$, $E^4_{0,0} = H_0(\mathcal{R}; \mathbb{Z}) = \mathbb{Z}$ and  $E^4_{-8, 6} = H_6(\mathcal{R}; \mathbb{Z}) = \mathbb{Z}$, we see that to kill $E^4_{-4, 3}$ we need both  $d^4: E^4_{0,0} \rightarrow E^4_{-4, 3}$ and $d^4: E^4_{-4,3} \rightarrow E^4_{-8, 6}$ to be nontrivial. Therefore,
\begin{equation*}
HF_0(L_2, L_1; \Lambda'') = E^5_{0,0} \oplus E^5_{-4, 4} = 0.
\end{equation*}

\vspace*{0.07in}
\textbf{2.} Assume that
\begin{equation*}
d^4: \;\; E_{0, 1}^4 = \mathbb{Z}_2 \rightarrow E_{-4, 4}^4 = \mathbb{Z}_2
\end{equation*}
is trivial. This means that $E_{0, 1}^5 = E_{-4,4}^5 = \mathbb{Z}_2$.  Arguing as before, we get $E_{-4, 5}^1 = H_5(\mathcal{R}; \mathbb{Z}) = 0  = E_{-4,5}^5$ and
\begin{equation*}
\begin{gathered}
E^5_{0, 1} \oplus E^5_{-4, 5} = \mathbb{Z}_2 =  HF_1(L_2, L_1; \Lambda''), \\
\mathbb{Z}_2 = HF_{1}(L_2, L_1; \Lambda'') = HF_{-1}(L_2, L_1; \Lambda'') = E^{\infty}_{-4,3} = E^{5}_{-4,3}.
\end{gathered}
\end{equation*}
We know that $E^1_{-4,3} = H_3(\mathcal{R}; \mathbb{Z}) = \mathbb{Z} \oplus \mathbb{Z}$. If $E^5_{-4,3} = \mathbb{Z}_2$, then both $d_4: E_{0,0}^4 = \mathbb{Z} \rightarrow E_{-4,3}^4$ and $d^4: E_{-4,3}^4 \rightarrow E_{-8, 6}^4 = \mathbb{Z}$ are nonzero. Hence $E^5_{0,0} = 0$ and
\begin{equation*}
HF_0(L_2, L_1; \Lambda'') = E^5_{0,0} \oplus E^5_{-4, 4} = \mathbb{Z}_2.
\end{equation*}
\end{proof}

In Section $\ref{cobconstr}$ we constructed the spin monotone cobordism $V: L \rightsquigarrow (L_1, L_2)$. Assume that there exists a spin monotone Lagrangian cobordism
\begin{equation*}
\widetilde{V}: L \rightsquigarrow (L_2, L_1).
\end{equation*}
Since all the Lagrangians and cobordisms are orientable, we get that all the minimal Maslov numbers are even.

Consider $\Lambda = \mathbb{Z}[T, T^{-1}]$, where $deg(T) = -2$. Theorem $\ref{basictheorem}$, Lemma $\ref{floerproj}$, and Lemma $\ref{inthomlemma}$ imply that
\begin{equation*}
HF_0(L_2, L_2; \Lambda) = 0, \quad HF_1(L_2, L_2; \Lambda) = \mathbb{Z}_2^4
\end{equation*}
and for any relative spin structure we have two options
\begin{equation*}
\begin{gathered}
HF_0(L_2, L_1; \Lambda) = HF_1(L_2, L_1; \Lambda) = 0 \quad \text{or} \\
HF_0(L_2, L_1; \Lambda) =  HF_{1}(L_2, L_1; \Lambda) = \mathbb{Z}_2^2.
\end{gathered}
\end{equation*}
Theorem $\ref{longexactseq}$ says that we have the related long exact sequences for the cobordism $V$ and $\widetilde{V}$. Let $K = L_2$ in the exact sequences. The cobordism $V$ gives us
\begin{equation}\label{firstexact}
\begin{gathered}
HF_1(L_2, L_2; \Lambda) \rightarrow HF_1(L_2, L_1; \Lambda) \rightarrow HF_1(L_2, L; \Lambda) \rightarrow  \\
HF_0(L_2, L_2; \Lambda) \rightarrow HF_0(L_2, L_1; \Lambda)
\end{gathered}
\end{equation}
and the cobordism $\widetilde{V}$ gives us
\begin{equation}\label{secondexact}
\begin{gathered}
HF_1(L_2, L_1; \Lambda) \rightarrow HF_1(L_2, L_2; \Lambda) \rightarrow HF_1(L_2, L; \Lambda) \rightarrow \\
HF_0(L_2, L_1; \Lambda) \rightarrow HF_0(L_2, L_2; \Lambda)
\end{gathered}
\end{equation}
Lemma $\ref{inthomlemma}$ says that we have two options.

\vspace{0.06in}
\noindent
\textbf{1.} Suppose that $HF_0(L_2, L_1 \Lambda) = HF_1(L_2, L_1; \Lambda) = 0$. Then, the long exact sequences $(\ref{firstexact})$ and $(\ref{secondexact})$ have the form
\begin{equation*}
\begin{gathered}
0 \rightarrow  HF_1(L_2, L; \Lambda) \rightarrow 0 \\
0 \rightarrow \mathbb{Z}_2^4 \rightarrow HF_1(L_2, L; \Lambda) \rightarrow 0
\end{gathered}
\end{equation*}
We obviously get a contradiction.

\vspace{0.06in}
\noindent
\textbf{2.} Suppose that $HF_0(L_2, L_1; \Lambda) = HF_1(L_2, L_1; \Lambda) = \mathbb{Z}_2^2$. Then, the long exact sequences $(\ref{firstexact})$ and $(\ref{secondexact})$ have the form
\begin{equation*}
\begin{gathered}
\mathbb{Z}_2^4  \rightarrow \mathbb{Z}_2^2 \rightarrow HF_1(L_2, L; \Lambda) \rightarrow  0 \\
\mathbb{Z}_2^2 \rightarrow \mathbb{Z}_2^4 \rightarrow HF_1(L_2, L; \Lambda) \rightarrow \mathbb{Z}_2^2 \rightarrow 0
\end{gathered}
\end{equation*}
The first exact sequence says that there is a surjection from $\mathbb{Z}_2^2$ to $HF_1(L_2, L; \Lambda)$. The second exact sequence says that there is a surjection from $HF_1(L_2, L; \Lambda)$ to $\mathbb{Z}_2^2$. Therefore, $HF_1(L_2, L; \Lambda)$ consists of four elements.

These facts show that the map $HF_1(L_2, L; \Lambda) \rightarrow \mathbb{Z}_2^2$ in the second sequence has trivial kernel. Hence, the map $\mathbb{Z}_2^2 \rightarrow \mathbb{Z}_2^4 $ from the second sequence is surjective. This is obviously not possible.

\vspace{0.06in}

As a result, we see that the cobordism $\widetilde{V}: L \rightsquigarrow (L_2, L_1)$ does not exist.

\end{document}